\newcommand{\comment}[1]{}
\newif\ifpdf
\newtheorem{thm}{Theorem}[section]
\newtheorem{observation}[thm]{Observation}
\newtheorem{corollary}[thm]{Corollary}
\newtheorem{lemma}[thm]{Lemma}
\newtheorem{theorem}[thm]{Theorem}
\begin{document}

\title{Domination Critical Kn\"odel graphs}
\author{{\small D.A. Mojdeh}$^{a}$, {\small S.R. Musawi}$^{b}$ and {\small E. Nazari}$^{b}$ \\
{$^{a}$\small Department of
Mathematics, University of Mazandaran, Babolsar, Iran}\\
{\small Email: damojdeh@yahoo.com}\\
{$^{b}$\small Department of Mathematics, University of Tafresh, Tafresh, Iran}\\
{\small Email: r\_musawi@yahoo.com and nazari.esmaeil@gmail.com}\\
}
\date{}
\maketitle

\begin{abstract}
A set $D$ of vertices of a graph $G$ is a dominating set if each vertex of $V(G) \setminus D$, is adjacent to some vertex of $D$. The domination number of $G$, $\gamma(G)$, is the minimum cardinality of a dominating set of $G$. A graph $G$ is called domination vertex critical, or just $\gamma$-critical if removal of any vertex decreases the domination number. A graph $G$ is called domination vertex stable, or just $\gamma$-stable, if removal of any vertex does not decrease the domination number. For an even integer $n\ge 2$ and $1\le \Delta \le \lfloor\log_2n\rfloor$, a \textit{Kn\"odel graph} $W_{\Delta,n}$ is a $\Delta$-regular bipartite graph of even order $n$, with vertices $(i,j)$, for $i=1,2$ and $0\le j\le n/2-1$, where for every $j$, $0\le j\le n/2-1$, there is an edge between vertex $(1, j)$ and every vertex $(2,(j+2^k-1)$ mod (n/2)), for $k=0,1,\cdots,\Delta-1$. In this paper, we study the domination criticality and domination stability of Kn\"odel graphs. We characterize the  3-regular and 4-regular Kn\"odel graphs by $\gamma$-criticality or $\gamma$-stability.\\
\textbf{Keywords:} Kn\"odel graph, domination number, domination vertex critical graph.\\
\textbf{Mathematics Subject Classification [2010]:} 05C69, 05C30
\end{abstract}

\section{Introduction}
Let $G=(V,E)$ denote a simple graph of order $n=|V(G)|$ and size $m=|E(G)|$. Two vertices $u,v\in V(G)$ are \textit{adjacent} or \textit{neighbor} if $uv\in E(G)$. The \textit{open neighborhood} of a vertex $u\in V(G)$ is denoted by $N(u)=\{v\in V(G) | uv\in E(G)\}$ and for a vertex set $S\subseteq V(G)$, $N(S)=\underset{u\in S}{\cup}N(u)$. The cardinality of $N(u)$ is called the \textit{degree} of $u$ and is denoted by $\deg(u)$, (or $\deg_G(u)$ to refer it to $G$). The \textit{maximum degree} and \textit{minimum degree} among all vertices in $G$ are denoted by $\Delta(G)$ and $\delta(G)$, respectively. A graph $G$ is called \textit{$k$-regular} if $k=\delta(G)=\Delta(G)$. If $D$ is a set of vertices of a graph $G$, then a vertex $u\in V(G)\setminus D$ is a $D$-external private neighbor ($D$-epn) of the vertex $v\in D$ if $N(u)\cap D=\{v\}$. The set of all $D$-epns of $v$ is denoted by $epn(v,D)$. A graph $G$ is a \textit{bipartite graph} if its vertex set can partition to two disjoint sets $X$ and $Y$ such that each edge in $E(G)$ connects a vertex in $X$ with a vertex in $Y$.

A set $D\subseteq V(G)$ is a \textit{dominating set} if for each $u\in V(G) \setminus D$, $u$ is adjacent to some vertex $v\in D$. The \textit{domination number} of $G$, $\gamma(G)$, is the minimum cardinality among all dominating sets of $G$. For more on dominating set, interested reader may see \cite{hhs}. A vertex $u$ is a critical vertex if $\gamma(G-u)<\gamma(G)$ and a graph $G$ is called \textit{domination vertex critical} if for any vertex $u\in V(G)$, we have $\gamma(G-u)<\gamma(G)$. We say $G$ is a  $\gamma$-critical graph. A graph $G$ is called \textit{domination vertex stable} or \textit{$\gamma$-stable} if for any vertex $u\in V(G)$, we have $\gamma(G-u)=\gamma(G)$. One of the most important problems in domination theory is to determine graphs in which every vertex is critical or stable. Many authors have already studied the criticality and stability of several domination parameters, see for example \cite{bhns,bcd,cjk,hj,jsk,j,mf1,mf2, mh, s,sb}.

An automorphism of the graph $G$ is a permutation $\sigma$ on $V(G)$ such that the pair of vertices $(u,v)$ are adjacent if and only if the pair $(\sigma(u),\sigma(v))$ also are adjacent. A graph $G$ is called \textit{vertex transitive} if for each pair of vertices $u$ and $v$ in $V(G)$, there exists an automorphism $\sigma$ such that $\sigma(u)=v$. For other graph theory notation and terminology not given here, we refer to \cite{bm}.

An interesting family of graphs namely \textit{Kn\"odel graphs} have been introduced about 1975 \cite{k}, and have been studied seriously by some authors since 2001, see, for example, \cite{fr}. For an even integer $n\ge 2$ and $1\le \Delta \le \lfloor\log_2n\rfloor$, a \textit{Kn\"odel graph} $W_{\Delta,n}$ is a $\Delta$-regular bipartite graph of even order $n$, with vertices $(i,j)$, for $i=1,2$ and $0\le j\le n/2-1$, where for every $j$, $0\le j\le n/2-1$, there is an edge between vertex $(1, j)$ and every vertex $(2,(j+2^k-1)$ mod (n/2)), for $k=0,1,\cdots,\Delta-1$ (see \cite{xxyf}). Kn\"odel graphs, $W_{\Delta,n}$, are one of the three important families of graphs that they have good properties in terms of broadcasting and gossiping, see for example \cite{gh}. It is worth-noting that any Kn\"odel graph is a Cayley graph and so it is a vertex transitive graph, see \cite{fr}.

Xueliang et al. \cite{xxyf} obtained exact domination number for $W_{3,n}$ and Mojdeh et al. \cite{mmnj} obtained exact domination number for $W_{4,n}$ \cite{mmn}.

\begin{theorem}[Xueliang et. al. \cite{xxyf}]\label{t11}
For each integer $n\ge8$, we have \[\gamma(W_{3,n})=2\lfloor\frac{n}{8}\rfloor+\left\{\begin{array}{cc}0&n\equiv0 \emph{ (mod 8)}\\1&n\equiv2 \emph{ (mod 8)} \\2&n\equiv4,6 \emph{ (mod 8)}\end{array}  \right. .\]
\end{theorem}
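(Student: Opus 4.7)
The plan is to prove the upper bound by an explicit construction periodic with period $8$ in the second coordinate, and the lower bound by a bipartite counting argument. For the upper bound, I would first record the neighborhoods: in $W_{3,n}$ the vertex $(1,j)$ has closed neighborhood $\{(1,j),(2,j),(2,j+1),(2,j+3)\}$ and $(2,j)$ has closed neighborhood $\{(2,j),(1,j),(1,j-1),(1,j-3)\}$, where all indices are taken modulo $n/2$. Consequently the pair $B_i=\{(1,4i),(2,4i+2)\}$ jointly dominates exactly the eight vertices whose second coordinate lies in $\{4i,4i+1,4i+2,4i+3\}$. When $n\equiv 0\pmod 8$, the $n/8$ disjoint blocks $B_0,B_1,\dots,B_{n/8-1}$ partition $V(W_{3,n})$ into dominated classes, so their union is a perfect dominating set of size $2\lfloor n/8\rfloor$. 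For the residues $n\equiv 2,4,6\pmod 8$, I would start with $B_0,\dots,B_{\lfloor n/8\rfloor-1}$ and adjoin a short ``patch'' of one or two vertices chosen to cover the remaining $2$, $4$, or $6$ indices near $j=n/2-1$; a short case check (for instance, using patches of the form $\{(2,n/2-1)\}$, $\{(1,n/2-2),(2,n/2-1)\}$, and $\{(1,n/2-3),(2,n/2-1)\}$ respectively) confirms that patches of sizes $1,2,2$ suffice, yielding dominating sets of sizes $2\lfloor n/8\rfloor+1$, $2\lfloor n/8\rfloor+2$, and $2\lfloor n/8\rfloor+2$.

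For the lower bound, let $D$ be any dominating set and write $D_1=D\cap X$, $D_2=D\cap Y$, where $X=\{(1,j):0\le j\le n/2-1\}$ and $Y=\{(2,j):0\le j\le n/2-1\}$ are the bipartition classes. Because $X$ is independent, every vertex of $X\setminus D_1$ must be dominated from $D_2$; since each vertex of $D_2$ has exactly three neighbors (all in $X$), we obtain $|D_1|+3|D_2|\ge n/2$. The symmetric inequality $3|D_1|+|D_2|\ge n/2$ holds by swapping the roles of $X$ and $Y$. Adding these yields $|D|\ge n/4$, which already matches the claimed value for $n\equiv 0\pmod 8$ and, after taking integer ceilings, for $n\equiv 2\pmod 8$ and $n\equiv 6\pmod 8$. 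The delicate case is $n\equiv 4\pmod 8$: here $n/4=2\lfloor n/8\rfloor+1$ is an integer, but equality $|D|=n/4$ would force both inequalities above to be equalities and hence $|D_1|=|D_2|=n/8$, which is impossible since $n/8$ is not an integer when $n\equiv 4\pmod 8$. Therefore $|D|\ge 2\lfloor n/8\rfloor+2$ in that case as well.

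The main obstacle, and the only step that does real work beyond bookkeeping, is the parity argument for $n\equiv 4\pmod 8$: the naive ceiling bound $\lceil n/4\rceil$ falls short of the truth by exactly one, and one must use the bipartite split to observe that a hypothetical minimum dominating set of size $n/4$ would constitute a perfect code and therefore require a non-integer number of vertices in each partite class. The remaining work---verifying that each patch covers exactly the residual indices and performing the modular arithmetic near the wrap-around index $j=n/2-1$---is straightforward case analysis.
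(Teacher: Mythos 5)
This statement is quoted from Xueliang et al.\ (reference \cite{xxyf}) and the paper supplies no proof of its own, so there is nothing internal to compare your argument against; I can only assess it on its merits. Your lower bound is correct and complete: the two inequalities $|D_1|+3|D_2|\ge n/2$ and $3|D_1|+|D_2|\ge n/2$ follow exactly as you say from the independence of each partite class and the $3$-regularity, their sum gives $|D|\ge n/4$, and the refinement for $n\equiv 4\pmod 8$ is the right idea: equality would force both inequalities tight, hence $|D_1|=|D_2|=n/8\notin\mathbb{Z}$, a contradiction. This handles the one residue where the plain ceiling bound is off by one.

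The upper bound contains a bookkeeping slip you should repair. The pair $B_i=\{(1,4i),(2,4i+2)\}$ does \emph{not} dominate the eight vertices with second coordinate in $\{4i,4i+1,4i+2,4i+3\}$: it dominates $(2,4i),(2,4i+1),(2,4i+2),(2,4i+3)$ on one side but $(1,4i-1),(1,4i),(1,4i+1),(1,4i+2)$ on the other, since the neighbors of $(2,4i+2)$ are $(1,4i+2),(1,4i+1),(1,4i-1)$. For $n\equiv 0\pmod 8$ this is harmless because the union of the shifted blocks still partitions the vertex set, and your patches for $n\equiv 2$ and $n\equiv 6\pmod 8$ happen to cover the true residual sets. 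But for $n\equiv 4\pmod 8$ (write $n/2=4m+2$) the vertices left undominated by $B_0,\dots,B_{m-1}$ are $(1,4m-1),(1,4m),(2,4m),(2,4m+1)$, and your proposed patch $\{(1,n/2-2),(2,n/2-1)\}=\{(1,4m),(2,4m+1)\}$ misses $(1,4m-1)=(1,n/2-3)$, whose only $Y$-neighbors are $(2,4m-1),(2,4m),(2,4m+2)$. The patch $\{(1,n/2-2),(2,n/2-2)\}=\{(1,4m),(2,4m)\}$ does work, since $(2,4m)$ dominates $(1,4m-1)$ and $(1,4m)$ dominates $(2,4m+1)$. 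With that substitution (and the corrected description of what each block dominates) your construction gives the claimed upper bounds in all four residue classes, and the proof is complete.
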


\begin{theorem}[Mojdeh et al. \cite{mmn}]\label{t12}
For each integer $n\ge 16$, we have \[\gamma (W_{4,n})=2\lfloor\frac{n}{10}\rfloor+\left\{\begin{array}{cc}0&n\equiv0 \emph{ (mod 10)}\\2&n=16,18,36\,;\,n\equiv2,4 \emph{ (mod 10)} \\3&n=28\,;\,n\equiv6 \emph{ (mod 10)},n\ne16,36\\4&n\equiv8 \emph{ (mod 10)},n\ne18,28\end{array}  \right. .\]
\end{theorem}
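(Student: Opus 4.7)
The plan is to establish Theorem \ref{t12} by proving matching upper and lower bounds. For the upper bound I would exhibit an explicit dominating set whose size depends on $n\bmod 10$. For the lower bound I would combine the trivial density estimate $\gamma\ge\lceil n/5\rceil$ with a case analysis that exploits the jump set $\{2^k-1:k=0,1,2,3\}=\{0,1,3,7\}$ defining $W_{4,n}$. Let $f(n)$ denote the claimed value.

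For the upper bound the key building block is the pair $P=\{(1,0),(2,4)\}$: using the jump set one computes $N[(1,0)]=\{(1,0),(2,0),(2,1),(2,3),(2,7)\}$ and $N[(2,4)]=\{(2,4),(1,4),(1,3),(1,1),(1,n/2-3)\}$, so $|N[(1,0)]\cup N[(2,4)]|=10$. Translating this motif, set $D_0=\bigcup_{i=0}^{\lfloor n/10\rfloor-1}\{(1,5i),(2,5i+4)\}$, with second coordinates taken modulo $n/2$. When $n\equiv 0\pmod{10}$ the shifted coordinate $n/2-3\equiv 2\pmod 5$ covers exactly the residue class missed by $\{5i,5i+1,5i+3,5i+4\}$, so $D_0$ alone dominates $V(W_{4,n})$; an analogous check works on Part $2$. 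For each nonzero residue of $n\bmod 10$ I would augment $D_0$ by one or two carefully placed extra vertices to plug the remaining gaps, producing a dominating set of size $f(n)$. The sporadic values $n\in\{16,18,28,36\}$ admit ad hoc dominating sets strictly smaller than the generic pattern would suggest; these would be listed explicitly and verified by inspection.

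For the lower bound one starts from $\gamma(W_{4,n})\ge\lceil n/5\rceil$, which falls short of $f(n)$ by $1$ or $2$ depending on $n\bmod 10$. To close this gap, I would suppose $|D|<f(n)$ and deduce that the closed neighborhoods $\{N[v]:v\in D\}$ form an almost-perfect cover of $V(W_{4,n})$ with only a controlled number of overlapping pairs, then use the arithmetic of the shift set $\{0,1,3,7\}\pmod{n/2}$ together with a careful accounting of $epn(v,D)$ on each side of the bipartition to derive a contradiction in each residue class of $n\bmod 10$. Since the graph is vertex transitive, one may also normalize by placing a chosen vertex of $D$ at $(1,0)$, which reduces the number of configurations to inspect.

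The main obstacle is the lower bound, especially the sporadic values $16,18,28,36$: the short cycle length $n/2$ creates accidental coincidences among the jumps $\{0,1,3,7\}$, enabling strictly smaller dominating sets than the generic argument predicts and breaking the tiling-style contradiction. These four values therefore require a separate finite verification of both inequalities (feasible since $n/2\le 18$), after which the uniform arithmetic argument for all remaining $n$ in each residue class completes the proof.
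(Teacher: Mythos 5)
This statement is not proved in the paper at all: Theorem \ref{t12} is imported verbatim from the companion paper \cite{mmn}, so there is no in-paper argument to compare yours against. That said, your outline follows the same route that \cite{mmn} takes and that Section 2 of this paper is set up to support: a period-5 motif for the upper bound, and for the lower bound the counting estimate $\gamma\ge\lceil n/5\rceil$ sharpened by tracking forced overlaps among closed neighborhoods via the difference set $\mathscr{M}_4=\{2^a-2^b:0\le b<a<4\}=\{1,2,3,4,6,7\}$ (Lemmas \ref{l23} and \ref{l24}), i.e.\ the cyclic-sequence machinery. Your computation of $N[(1,0)]$, $N[(2,4)]$ and the verification that $D_0$ dominates when $n\equiv 0\pmod{10}$ are correct.

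As a proof, however, the proposal has genuine gaps. First, a small but telling one: relative to $|D_0|=2\lfloor n/10\rfloor$ the claimed formula requires $0,2,2,3,4$ extra vertices for $n\equiv 0,2,4,6,8\pmod{10}$ respectively, so ``one or two carefully placed extra vertices'' cannot produce the right upper bound for the residues $6$ and $8$; the constructions there are not routine single-vertex patches and need to be exhibited. Second, and more seriously, the lower bound is only gestured at. For $n\equiv 2,4,6\pmod{10}$ you must exclude dominating sets of size $\lceil n/5\rceil$, which already requires pinning down $|D_U|$ and $|D_V|$ by the two-sided counting $4x+y\ge|V|$, $4y+x\ge|U|$ and then showing that the resulting cyclic-sequence must contain an element of $\mathscr{M}_4$ (forcing a lost neighbor); but for $n\equiv 8\pmod{10}$ the deficit is $2$, so one must exclude \emph{two} consecutive sizes, $\lceil n/5\rceil$ and $\lceil n/5\rceil+1$, and the second exclusion involves a substantial enumeration of admissible cyclic-sequences (compare the six-case analysis needed in Lemma \ref{l44} of this paper merely to handle vertex deletion in the residue-$6$ class). ``A controlled number of overlapping pairs'' must be quantified — Lemma \ref{l24} gives the precise bound $\Delta|A|-|N(A)|$ — and the contradiction has to be derived case by case. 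Your instinct to treat $n\in\{16,18,28,36\}$ by finite verification is sound, since the wraparound makes the complement distances $\frac{n}{2}-d$ land in $\mathscr{M}_4$ and the generic analysis fails there; but until the residue-class lower bounds are actually carried out, the argument establishes only the upper bounds and the trivial floor $\lceil n/5\rceil$.
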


In this paper, we study the domination criticality and domination stability of 3-regular and 4-regular Kn\"odel graphs. In section 3, we determine which 3-regular Kn\"odel graphs are $\gamma$-critical or $\gamma$-stable. In section 4, we determine which 4-regular Kn\"odel graphs are $\gamma$-critical or $\gamma$-stable.

We will use the following.

\begin{theorem}\label{t13}\cite{b,was}
If $G$ is a graph with $n$ vertices, then $\gamma(G)\ge\frac{n}{\Delta+1}$.
\end{theorem}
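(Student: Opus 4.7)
The plan is to use a direct counting argument. Let $D$ be a minimum dominating set of $G$, so $|D|=\gamma(G)$. First I would observe that, by the definition of a dominating set, every vertex of $V(G)$ lies in the closed neighborhood $N[v]=N(v)\cup\{v\}$ of some $v\in D$; equivalently, $V(G)=\bigcup_{v\in D}N[v]$.

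Next I would bound the size of each closed neighborhood. For any $v\in V(G)$, we have $|N[v]|=\deg(v)+1\le\Delta+1$, where $\Delta=\Delta(G)$. Applying the union bound to the covering $V(G)=\bigcup_{v\in D}N[v]$ gives
\[
n = |V(G)| \;\le\; \sum_{v\in D}|N[v]| \;\le\; |D|\,(\Delta+1) \;=\; \gamma(G)(\Delta+1).
\]
Rearranging yields $\gamma(G)\ge n/(\Delta+1)$, as required.

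The argument is essentially a one-line pigeonhole/covering bound, so there is no real obstacle: the only point that needs a brief justification is that the closed neighborhoods of vertices in $D$ do cover all of $V(G)$, which is immediate from the definition of dominating set (vertices in $D$ cover themselves; vertices outside $D$ are covered because they are adjacent to some member of $D$).
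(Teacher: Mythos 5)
Your argument is correct and complete: the closed neighborhoods of the vertices of a minimum dominating set cover $V(G)$, each has size at most $\Delta+1$, and the union bound gives $n\le\gamma(G)(\Delta+1)$. This is exactly the classical counting argument behind the cited result (the paper itself gives no proof, quoting it from the literature), so there is nothing to add.
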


The following is straightforward.
\begin{observation}\label{o14}
If $G$ is a $\gamma$-critical graph, then $\gamma(G-u)+1=\gamma(G)$ for any vertex $u\in V(G)$.
\end{observation}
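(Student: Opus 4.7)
The plan is to establish the two inequalities $\gamma(G-u)\le \gamma(G)-1$ and $\gamma(G)\le \gamma(G-u)+1$ separately and then combine them. The first inequality is immediate from the definition: since $G$ is $\gamma$-critical, $\gamma(G-u)<\gamma(G)$ for every vertex $u$, and because both quantities are integers this strict inequality upgrades to $\gamma(G-u)\le \gamma(G)-1$.

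For the reverse inequality, I would start from a minimum dominating set $D$ of $G-u$, so $|D|=\gamma(G-u)$. The key observation is that $D\cup\{u\}$ is a dominating set of $G$: every vertex in $V(G)\setminus\{u\}$ is dominated by $D$ (this is exactly what it means for $D$ to dominate $G-u$), and the vertex $u$ itself lies in $D\cup\{u\}$, hence is trivially dominated. Therefore $\gamma(G)\le |D\cup\{u\}|\le |D|+1=\gamma(G-u)+1$.

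Chaining the two inequalities gives $\gamma(G)-1\le \gamma(G-u)\le \gamma(G)-1$, which yields $\gamma(G-u)+1=\gamma(G)$. No obstacle is expected here; the argument is essentially a one-line padding argument, and the statement is really just reformulating the strict decrease in the definition of $\gamma$-criticality as an equality by using the trivial upper bound $\gamma(G)\le \gamma(G-u)+1$ that holds for every graph and every vertex.
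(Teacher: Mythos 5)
Your proof is correct and is exactly the argument the paper has in mind: it states the observation without proof (``the following is straightforward''), and the padding step $D\cup\{u\}$ you use is the same device the paper itself employs in the proof of Lemma~\ref{l25}. Nothing further is needed.
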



\section{\bf Preliminary}
For simplicity, we use from the re-labeling of the vertices of a Kn\"odel graph that states by Mojdeh et. al. in \cite{mmnj}, as follows: we label the vertex $(1,i)$ by $u_{i+1}$ for each $i=0,1,...,n/2-1$, and the vertex $(2,j)$ by $v_{j+1}$ for $j=0,1,...,n/2-1$. Let $U=\{u_1,u_2,\cdots, u_{\frac{n}{2}}\}$ and $V=\{v_1,v_2,\cdots, v_{\frac{n}{2}}\}$. From now on, the vertex set of each Kn\"odel graph $W_{\Delta,n}$ is $U\cup V$ such that $U$ and $V$ are the two partite sets of the graph. Note that two vertices $u_i$ and $v_j$ are adjacent if and only if $j\in \{i+2^0-1,i+2^1-1,\cdots,i+2^{\Delta-1}-1\}$, where the addition is taken in modulo $n/2$. If $S$ is a set of vertices of $W_{\Delta,n}$, then clearly, $S_U=S\cap U$ and $S_V=S\cap V$ partition $S$, $|S|=|S_U|+|S_V|$, $N(S_U)\subseteq V$ and $N(S_V)\subseteq U$. If $D$ is a dominating set of $W_{\Delta,n}$, then, $U\setminus N(D_V)\subseteq D_U$ and $V\setminus N(D_U)\subseteq D_V$. In \cite{mmnj}, for any subset $\{u_{i_1},u_{i_2},\cdots,u_{i_k}\}$ of $U$ with $1\le i_1 <i_2<\cdots< i_k\le\frac{n}{2}$, authors corresponded a sequence based on the differences of the indices of $u_j$, $j=i_1,...,i_k$, as follows and they introduced some results of this sequence.

\begin{def}\cite{mmnj}
For any subset $A=\{u_{i_1},u_{i_2},\cdots,u_{i_k}\}$ of $U$ with $1\le i_1 <i_2<\cdots< i_k\le\frac{n}{2}$ we define a sequence $n_1, n_2, \cdots, n_k$, namely \textbf{cyclic-sequence}, where $n_j=i_{j+1}-i_j$ for $1\le j\le k-1$ and $n_k=\frac{n}{2}+i_1-i_k$. For two vertices $u_{i_j}, u_{i_{j'}}\in A$ we define \textbf{index-distance} of  $u_{i_j}$ and $u_{i_{j'}}$ by $id(u_{i_j}, u_{i_{j'}})=min\{|i_j-i_{j'}|,\frac{n}{2}-|i_j-i_{j'}|\}$.
\end{def}

\begin{observation}\label{o22}\cite{mmnj}
Let $A=\{u_{i_1},u_{i_2},\cdots,u_{i_k}\}\subseteq U$ be a set such that $1\le i_1 <i_2<\cdots< i_k\le\frac{n}{2}$ and let $n_1, n_2, \cdots, n_k$ be the corresponding cyclic-sequence of $A$. Then,\\
\emph{(1)} $n_1+n_2+\cdots+n_k=\frac{n}{2}$.\\
\emph{(2)} If $u_{i_j}, u_{i_{j'}}\in A$, then $id(u_{i_j}, u_{i_{j'}})$ equals to sum of some consecutive elements of the cyclic-sequence of $A$ and $\frac{n}{2}-id(u_{i_j}, u_{i_{j'}})$ is sum of the remaining elements of the cyclic-sequence. Furthermore, $\{id(u_{i_j},u_{i_{j'}}),\frac{n}{2}-id(u_{i_j},u_{i_{j'}})\}=\{|i_j-i_{j'}|,\frac{n}{2}-|i_j-i_{j'}|\}$.
\end{observation}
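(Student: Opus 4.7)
The plan is to deduce both parts from the same telescoping identity. Part (1) is an immediate calculation; part (2) then follows by applying (1) to the ``complementary'' portion of the cyclic sequence, after which the ``furthermore'' clause becomes a direct restatement of the definition of $id$.

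For part (1), I would substitute $n_j = i_{j+1} - i_j$ for $1 \le j \le k-1$ and $n_k = \frac{n}{2} + i_1 - i_k$ directly, giving
\[
\sum_{j=1}^{k} n_j = \sum_{j=1}^{k-1}(i_{j+1} - i_j) + \frac{n}{2} + i_1 - i_k = (i_k - i_1) + \frac{n}{2} + i_1 - i_k = \frac{n}{2},
\]
where the middle sum telescopes.

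For part (2), assume without loss of generality that $j < j'$. The same telescoping gives $i_{j'} - i_j = n_j + n_{j+1} + \cdots + n_{j'-1}$, a block of consecutive entries of the cyclic-sequence summing to $|i_j - i_{j'}|$. By part (1), the remaining cyclically consecutive entries $n_{j'}, \ldots, n_k, n_1, \ldots, n_{j-1}$ sum to $\frac{n}{2} - |i_j - i_{j'}|$. Since $id(u_{i_j}, u_{i_{j'}}) = \min\{|i_j - i_{j'}|, \frac{n}{2} - |i_j - i_{j'}|\}$, one of these two sums equals $id(u_{i_j}, u_{i_{j'}})$ and the other equals $\frac{n}{2} - id(u_{i_j}, u_{i_{j'}})$, which settles both the first assertion of (2) and the ``furthermore'' equality of unordered pairs simultaneously.

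There is no genuine obstacle here: the statement is essentially a definition-chase around a cyclic telescoping identity. The only step that calls for a bit of care is recognizing that the block $n_{j'}, \ldots, n_k, n_1, \ldots, n_{j-1}$ is ``consecutive'' in the cyclic ordering of the index set $\{1, \ldots, k\}$, which is precisely what licenses calling its sum a sum of consecutive elements of the cyclic-sequence. Once that convention is fixed, the entire observation reduces to bookkeeping.
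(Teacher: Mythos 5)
Your argument is correct: the telescoping computation gives (1) immediately, and applying it to the block $n_j,\dots,n_{j'-1}$ and its cyclic complement yields (2), with the only point of care (that the complementary block is consecutive in the cyclic order) explicitly noted. The paper does not reproduce a proof of this observation—it is quoted from \cite{mmnj}—and your definition-chase is precisely the routine verification intended there, so there is nothing further to compare or to fix.
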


We henceforth use the notation $\mathscr{M}_{\Delta}=\{ 2^a-2^b:0 \leq b<a< \Delta \}$ for $\Delta\geq 2$.

\begin{lemma}\label{l23}\cite{mmnj}
In the Kn\"odel graph $W_{\Delta,n}$ with vertex set $U\cup V$, for two distinct vertices $u_i$ and $u_j$, $N(u_i)\cap N(u_j)\ne \emptyset$ if and only if $id(u_i,u_j)\in\mathscr{M}_{\Delta}$ or $\frac{n}{2}-id(u_i,u_j)\in\mathscr{M}_{\Delta}$.
\end{lemma}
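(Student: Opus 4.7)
The statement is fundamentally a direct unpacking of the adjacency rule for $W_{\Delta,n}$, and the plan is to reduce ``$N(u_i)\cap N(u_j)\ne\emptyset$'' to a clean congruence and then translate it into the index-distance language.

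First I would write down the neighborhood of $u_i$ explicitly using the definition of the Kn\"odel graph: $N(u_i)=\{v_{(i+2^k-1)\bmod(n/2)}:0\le k\le\Delta-1\}$, and analogously for $N(u_j)$. Consequently, $N(u_i)\cap N(u_j)\ne\emptyset$ if and only if there exist $a,b\in\{0,1,\dots,\Delta-1\}$ with
\[
i+2^a-1\equiv j+2^b-1\pmod{n/2},\quad\text{equivalently}\quad i-j\equiv 2^b-2^a\pmod{n/2}.
\]
Because $u_i\ne u_j$ forces $i\not\equiv j\pmod{n/2}$, the exponents must differ, so after swapping the roles of $a$ and $b$ if necessary we may write $2^b-2^a=\pm m$ for some $m=2^{\max(a,b)}-2^{\min(a,b)}\in\mathscr{M}_\Delta$. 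The condition therefore becomes: there exists $m\in\mathscr{M}_\Delta$ with $i-j\equiv\pm m\pmod{n/2}$.

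Next I would convert this modular condition into the index-distance. Set $d=|i-j|$; since $1\le i,j\le n/2$ with $i\ne j$ we have $0<d<n/2$, and the two residues $\pm(i-j)\bmod(n/2)$ are exactly $d$ and $n/2-d$. Hence the congruence $i-j\equiv\pm m\pmod{n/2}$ is equivalent to $m\in\{d,\ n/2-d\}$. By Observation \ref{o22}(2), $\{d,\ n/2-d\}=\{id(u_i,u_j),\ n/2-id(u_i,u_j)\}$, so the condition rewrites as $id(u_i,u_j)\in\mathscr{M}_\Delta$ or $n/2-id(u_i,u_j)\in\mathscr{M}_\Delta$, which is the desired equivalence.

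I do not expect any real obstacle here; the only point requiring a moment's attention is to confirm that every $m\in\mathscr{M}_\Delta$ genuinely lies in $(0,n/2)$, so that reading the congruence $d\equiv m\pmod{n/2}$ as the equality $d=m$ is legitimate. This is guaranteed by the standing hypothesis $\Delta\le\lfloor\log_2 n\rfloor$, since any $m=2^a-2^b$ with $0\le b<a\le\Delta-1$ satisfies $0<m<2^{\Delta-1}\le n/2$. Once this is noted, the argument is a one-shot translation between the adjacency rule and the index-distance, and no case analysis on parity of $n/2$ or on which of $d,n/2-d$ is the smaller is needed.
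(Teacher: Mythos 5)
Your proof is correct. Note that the paper does not actually prove Lemma \ref{l23} (it is quoted from \cite{mmnj}), so there is no in-text argument to compare against; your reduction of $N(u_i)\cap N(u_j)\ne\emptyset$ to the congruence $i-j\equiv\pm(2^a-2^b)\pmod{n/2}$, together with the observation that every element of $\mathscr{M}_{\Delta}$ lies strictly between $0$ and $n/2$ (so the congruence becomes the equality $m\in\{d,\tfrac{n}{2}-d\}=\{id(u_i,u_j),\tfrac{n}{2}-id(u_i,u_j)\}$), is exactly the natural direct verification one would expect and is consistent with how the lemma is used in the paper.
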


\begin{lemma}\label{l24}\cite{mmnj}
Let $W_{\Delta,n}$ be a Kn\"odel graph with vertex set $U\cup V$. For any non-empty subset $A\subseteq U$, the corresponding cyclic-sequence of $A$ has at most $\Delta |A|-|N(A)|$ elements belonging to $\mathscr{M}_{\Delta}$.
\end{lemma}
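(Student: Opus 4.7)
The plan is to interpret the bound as a counting statement. Double-counting edges between $A$ and $N(A)$ gives $\Delta|A| = \sum_{v \in N(A)} d(v)$ with $d(v) := |N(v) \cap A|$, so $\Delta|A| - |N(A)| = \sum_{v \in N(A)} (d(v) - 1)$. Writing $T := \{j : n_j \in \mathscr{M}_\Delta\}$, it therefore suffices to construct, for each $j \in T$, a designated common neighbor $v^{(j)} \in N(A)$ of $u_{i_j}$ and $u_{i_{j+1}}$ in such a way that each $v^* \in N(A)$ is realized as $v^{(j)}$ by at most $d(v^*) - 1$ distinct indices $j$.

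Every element of $\mathscr{M}_\Delta$ has a unique representation $2^a - 2^b$ with $0 \le b < a < \Delta$, since $b$ is the $2$-adic valuation of $2^a - 2^b$. Hence for $j \in T$ we may write $n_j = 2^{a_j} - 2^{b_j}$ uniquely, and I would define $v^{(j)} := v_m$ by $m \equiv i_j + 2^{a_j} - 1 \pmod{n/2}$. Substituting into the adjacency rule of $W_{\Delta,n}$ shows directly that $v^{(j)}$ is the neighbor of $u_{i_j}$ corresponding to offset $a_j$ and the neighbor of $u_{i_{j+1}}$ corresponding to offset $b_j$ (using $n_k = n/2 + i_1 - i_k$ to handle the wraparound case $j=k$), so $v^{(j)} \in N(A)$.

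The heart of the proof is to show $|\{j \in T : v^{(j)} = v^*\}| \le d(v^*) - 1$ for every $v^* \in N(A)$. Listing the $\Delta$ neighbors of $v^*$ in $U$ in increasing order of $u$-index, their offset labels form a permutation $(\pi_1, \ldots, \pi_\Delta)$ of $\{0,1,\ldots,\Delta-1\}$. Let $p_1 < p_2 < \cdots < p_d$ denote the $A$-positions of the $d := d(v^*)$ neighbors of $v^*$ that actually lie in $A$, with induced offset subsequence $(o_1, \ldots, o_d)$. Unpacking $v^{(j)} = v^*$ forces $j \equiv p_s$ and $j+1 \equiv p_{s+1} \pmod{k}$ for some $s$ (with $p$-indices read mod $d$, so $p_{d+1} := p_1$), together with the strict inequality $o_s > o_{s+1}$ (also with $o$-indices read mod $d$); this inequality is exactly what is needed so that $n_j = 2^{o_s} - 2^{o_{s+1}}$ lies in $\mathscr{M}_\Delta$ with $a_j = o_s$ and $b_j = o_{s+1}$. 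Consequently $|\{j : v^{(j)} = v^*\}|$ is at most the number of cyclic descents of the sequence $(o_1, \ldots, o_d)$.

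Because $(o_1,\ldots,o_d)$ has $d$ distinct entries, a strict cyclic ascent enters its maximum element, so its cyclic descent count is at most $d - 1$. Summing over $v^*$,
\[
|T| \;=\; \sum_{v^* \in N(A)} \bigl|\{j \in T : v^{(j)} = v^*\}\bigr| \;\le\; \sum_{v^* \in N(A)} (d(v^*) - 1) \;=\; \Delta|A| - |N(A)|,
\]
as required. The most delicate step I anticipate is the wraparound contribution $s=d$: one must verify that if $p_1 = 1$, $p_d = k$, and $n_k \in \mathscr{M}_\Delta$, then $n_k = n/2 + i_1 - i_k$ combined with $i_1 - i_k \equiv 2^{o_d} - 2^{o_1} \pmod{n/2}$ genuinely forces $o_d > o_1$, so that this wrap case is correctly accounted for as one of the cyclic descents of $(o_1,\ldots,o_d)$ and not counted twice.
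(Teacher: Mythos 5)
Your argument is correct. Note first that the paper itself gives no proof of Lemma \ref{l24}: it is imported from the companion manuscript \cite{mmnj}, so there is no in-text argument to compare against; what you have written is a self-contained proof, which is more than the present paper supplies. The structure is sound: the identity $\Delta|A|-|N(A)|=\sum_{v\in N(A)}(d(v)-1)$ holds because each $u_i$ has $\Delta$ distinct $V$-neighbors (two offsets $2^l,2^{l'}$ with $l,l'<\Delta$ cannot coincide mod $n/2$); the representation $2^a-2^b$ with $b<a<\Delta$ is indeed unique via the $2$-adic valuation, so $v^{(j)}$ is well defined; and the computation $i_j+2^{a_j}-1=i_{j+1}+2^{b_j}-1 \pmod{n/2}$ (with $i_{k+1}$ read as $i_1+n/2$) shows $v^{(j)}$ is a common neighbor with offsets $a_j$ at $u_{i_j}$ and $b_j$ at $u_{i_{j+1}}$. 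The two points a reader must check, both of which you handle correctly, are (a) that the offset of a fixed $u\in A$ at a fixed $v^*$ is unique and that distinct neighbors of $v^*$ carry distinct offsets, so that $v^{(j)}=v^*$ genuinely forces $o_s=a_j>b_j=o_{s+1}$ and the map $j\mapsto s$ is injective into the cyclic descent set of $(o_1,\dots,o_d)$; and (b) the wraparound step $j=k$, where $n_k=n/2+i_1-i_k$ still yields $o_d=a_k>b_k=o_1$, so it contributes exactly one cyclic descent and is not double-counted. Since a cyclic sequence of $d\ge 2$ distinct values has at least one cyclic ascent (the step entering its maximum), each $v^*$ absorbs at most $d(v^*)-1$ indices of $T$, and summing gives the bound. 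The only cosmetic omission is the degenerate case $|A|=1$, where the single cyclic term equals $n/2\notin\mathscr{M}_\Delta$ and the bound $\Delta-|N(A)|=0$ holds trivially; you may wish to state this explicitly.
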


We remark that one can define the cyclic-sequence and index-distance for any subset of $V$ in a similar way, and thus the Observation \ref{o22} and Lemmas \ref{l23} and \ref{l24} are valid for cyclic-sequence and index-distance on subsets of $V$ as well.

The following property of Kn\"odel graphs is useful.
\begin{lemma}\label{l25}
Let $W_{\Delta,n}$ be a Kn\"odel graph with vertex set $U\cup V$. We have $\gamma(W_{\Delta,n})-1\le\gamma(W_{\Delta,n}-w)\le\gamma(W_{\Delta,n})$ for any vertex $w\in U\cup V$.
\end{lemma}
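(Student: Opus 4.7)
The plan is to prove the two inequalities separately. The lower bound $\gamma(W_{\Delta,n}-w)\ge \gamma(W_{\Delta,n})-1$ is a general fact that holds for any graph and any vertex, and has nothing to do with Knödel graphs: if $D'$ is any dominating set of $G-w$, then $D'\cup\{w\}$ dominates $G$ (the extra vertex $w$ dominates itself), so $\gamma(G)\le |D'|+1=\gamma(G-w)+1$. I would write this out in one line.

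For the upper bound $\gamma(W_{\Delta,n}-w)\le \gamma(W_{\Delta,n})$ the strategy is to exhibit a minimum dominating set of $W_{\Delta,n}$ that does not contain $w$; such a set remains a dominating set of $W_{\Delta,n}-w$. To find one, I would use vertex-transitivity of $W_{\Delta,n}$, noted in the introduction as a consequence of the Cayley structure. First, observe that $\gamma(W_{\Delta,n})<n$: since $\Delta\ge 1$, the partite set $U$ is itself a dominating set, so $\gamma(W_{\Delta,n})\le n/2<n$. Therefore any minimum dominating set $D_0$ has at least one non-member; pick $v\in V(W_{\Delta,n})\setminus D_0$. By vertex-transitivity there is an automorphism $\sigma$ with $\sigma(v)=w$, and then $D:=\sigma(D_0)$ is again a minimum dominating set, with $w=\sigma(v)\notin \sigma(D_0)=D$. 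Since automorphisms preserve the dominating-set property, $D$ dominates $W_{\Delta,n}$, and because $w\notin D$ it also dominates $W_{\Delta,n}-w$, giving $\gamma(W_{\Delta,n}-w)\le |D|=\gamma(W_{\Delta,n})$.

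The main (and only) obstacle is the trivial one of certifying that $\gamma(W_{\Delta,n})<n$, which guarantees the existence of the non-member $v$ needed to apply the automorphism. Once that is in place the proof is immediate and uses no combinatorial structure of $W_{\Delta,n}$ beyond vertex-transitivity; in particular, Theorems \ref{t11} and \ref{t12} are not required. The same argument in fact proves the statement for every vertex-transitive graph.
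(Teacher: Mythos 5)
Your proposal is correct and follows essentially the same route as the paper: the lower bound via $D_w\cup\{w\}$ and the upper bound via vertex-transitivity producing a $\gamma$-set avoiding $w$. The only difference is that you spell out why a $\gamma$-set omitting $w$ exists (namely $\gamma<n$ plus an automorphism moving a non-member onto $w$), a detail the paper leaves implicit under the phrase ``by transitivity.''
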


\begin{proof}
By transitivity of $W_{\Delta,n}$, for any vertex $w\in U\cup V$, there exist a $\gamma$-set, namely $D$, of $W_{\Delta,n}$ such that $w\notin D$. It is obvious that $D$ is a dominating set of $W_{\Delta,n}-w$ and so $\gamma(W_{\Delta,n}-w)\le\gamma(W_{\Delta,n})$. Now, let $D_w$ be a $\gamma$-set of $W_{\Delta,n}-w$. Immediately, we see that $D_w\cup\{w\}$ is a dominating set of $W_{\Delta,n}$ and we have $\gamma(W_{\Delta,n})\le\gamma(W_{\Delta,n}-w)+1$.
\end{proof}


\section{\bf  3-regular Kn\"odel graphs}
In this section, we will determine which 3-regular Kn\"odel graphs are $\gamma$-critical or $\gamma$-stable. For this, by vertex transitivity of Kn\"odel graphs, we remove the vertex $v_1$ and then we compare the  domination numbers of $W_{3,n}-v_1$ and $W_{3,n}$.

\begin{theorem}\label{t31}
A $3$-regular Kn\"odel graph $W_{3,n}$ is $\gamma$-critical if and only if $n\equiv 4$ \emph{(mod 8)}.
\end{theorem}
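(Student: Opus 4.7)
By the vertex-transitivity of $W_{3,n}$ noted in the introduction, all vertex-deleted subgraphs $W_{3,n}-w$ are pairwise isomorphic, so $W_{3,n}$ is $\gamma$-critical if and only if $\gamma(W_{3,n}-v_1)<\gamma(W_{3,n})$, and by Observation \ref{o14} this is equivalent to $\gamma(W_{3,n}-v_1)=\gamma(W_{3,n})-1$. Lemma \ref{l25} already forces $\gamma(W_{3,n}-v_1)\in\{\gamma(W_{3,n})-1,\gamma(W_{3,n})\}$, so the task reduces to deciding which of these two values is attained in each residue class of $n$ modulo $8$.

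For the ``only if'' direction (the residues $n\equiv 0,2,6\pmod 8$), I would apply Theorem \ref{t13} to $W_{3,n}-v_1$, which has $n-1$ vertices and maximum degree at most $3$, to obtain $\gamma(W_{3,n}-v_1)\ge\lceil(n-1)/4\rceil$. A direct comparison with the formula of Theorem \ref{t11} shows $\lceil(n-1)/4\rceil=\gamma(W_{3,n})$ for $n\equiv 0,2,6\pmod 8$, so Lemma \ref{l25} then forces equality and $W_{3,n}$ is $\gamma$-stable. For $n\equiv 4\pmod 8$ the same inequality only gives $\gamma(W_{3,n}-v_1)\ge\gamma(W_{3,n})-1$, contributing no information and leaving the ``if'' direction open.

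For the ``if'' direction, write $n=8k+4$, so $\gamma(W_{3,n})=2k+2$ by Theorem \ref{t11}. I would construct an explicit set $D\subseteq U\cup V$ of cardinality $2k+1$ that dominates $W_{3,n}-v_1$. Using the adjacency rule --- $u_i$ is adjacent to $v_i,v_{i+1},v_{i+3}$ and symmetrically $v_j$ is adjacent to $u_j,u_{j-1},u_{j-3}$ (all indices modulo $n/2$) --- I would choose $D$ as an almost-periodic union of arithmetic progressions of common difference $4$ in $U$ and in $V$, then adjust a single vertex near the ``seam'' so that $v_1$ is the unique vertex of $U\cup V$ left outside $D\cup N(D)$. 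Dominance is then verified by a short case analysis on residues modulo $n/2=4k+2$.

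The principal obstacle is the construction itself: because $n/2=4k+2$ is \emph{not} a multiple of $4$, no pattern of density exactly $1/4$ tiles $U$ (or $V$) cleanly, and one must exploit the slack gained from not having to dominate $v_1$ to absorb the two leftover indices. Writing down a single $D$ that works uniformly in all $k\ge 0$, and then checking that no vertex other than $v_1$ is missed, is the only real computation in the theorem; once this is carried out, Observation \ref{o22} and Lemmas \ref{l23}--\ref{l24} provide convenient bookkeeping tools for the verification.
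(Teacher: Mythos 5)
Your ``only if'' direction is complete and is exactly the paper's argument: apply Theorem \ref{t13} to $W_{3,n}-v_1$ (which has $n-1$ vertices and maximum degree at most $3$) to get $\gamma(W_{3,n}-v_1)\ge\lceil (n-1)/4\rceil$, and check against Theorem \ref{t11} that this lower bound already equals $\gamma(W_{3,n})$ when $n\equiv 0,2,6\pmod 8$; together with Lemma \ref{l25} this gives stability in those residues. That part needs no changes.

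The gap is in the ``if'' direction: you never actually produce the dominating set of size $\gamma(W_{3,n})-1$ for $W_{3,n}-v_1$, and you yourself identify this construction as ``the only real computation in the theorem.'' A plan that says ``take arithmetic progressions of difference $4$ and adjust one vertex at the seam'' is not yet a proof, because the whole difficulty you correctly flag --- that $n/2=4k+2$ is not divisible by $4$, so a density-$1/4$ pattern does not close up cyclically --- is resolved only by exhibiting a concrete set and verifying it. The paper does exactly this: for $n=8t+4$ it takes
\[
D=\{u_{4i-2}: 1\le i\le t\}\cup\{v_{4i}: 1\le i\le t\}\cup\{v_{4t+2}\},
\]
of size $2t+1$. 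Using the adjacency rule you stated ($u_i\sim v_i,v_{i+1},v_{i+3}$ and $v_j\sim u_j,u_{j-1},u_{j-3}$), one checks that the $u_{4i-2}$ cover $v_{4i-2},v_{4i-1},v_{4i+1}$ for $1\le i\le t$, which together with $v_4,v_8,\dots,v_{4t},v_{4t+2}$ covers all of $V$ except $v_1$, while the $v_{4i}$ cover $u_{4i-3},u_{4i-1},u_{4i}$, $v_{4t+2}$ covers $u_{4t-1},u_{4t+1},u_{4t+2}$, and the $u_{4i-2}\in D$ fill in the rest of $U$; so $D$ dominates $W_{3,8t+4}-v_1$ and $\gamma(W_{3,8t+4}-v_1)\le 2t+1<2t+2=\gamma(W_{3,8t+4})$. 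Without writing down such a set (this one or an equivalent) and doing this verification, the criticality half of the theorem remains unproven; note also that no appeal to Observation \ref{o22} or Lemmas \ref{l23}--\ref{l24} is needed here --- the check is a direct index computation.
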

\begin{proof}
We have four cases in terms of  $n$ respect to modulo $8$. We will show that for $n\equiv 4$ (mod 8), the graph $W_{3,n}$ is $\gamma$-critical and in the other three cases $W_{3,n}$ is $\gamma$-stable. By vertex transitivity of Kn\"odel graphs, we use the vertex $v_1$ as an arbitrary vertex.\\
First assume that $n\equiv 4$ (mod 8). We set $n=8t+4$, where $t$ is a positive integer. By Theorem \ref{t11}, we know that $\gamma(W_{3,8t+4})=2t+2$. On the other hand, the set $D=\{u_{4i-2}:i=1,2,\cdots,t\}\cup\{v_{4i}:i=1,2,\cdots,t\}\cup\{v_{4t+2}\}$ is a dominating set for $W_{3,8t+4}-v_1$ and so $\gamma(W_{3,8t+4}-v_1)\le 2t+1<\gamma(W_{3,8t+4})$. Therefore $W_{3,8t+4}$ is $\gamma$-critical and by Observation \ref{o14}, $\gamma(W_{3,8t+4}-v_1)= 2t+1$.\\
For the rest of cases we use Theorems \ref{t11} and \ref{t13}.\\
If $n\equiv 0$ (mod 8), then we set $n=8t$, where $t$ is a positive integer. By Theorem \ref{t11}, we know that $\gamma(W_{3,8t})=2t$ and by Theorem \ref{t13} we have $\gamma(W_{3,8t}-v_1)\ge \lceil\frac{8t-1}{4}\rceil=2t= \gamma(W_{3,8t})$ and so $W_{3,8t}$ is not $\gamma$-critical. If $n\equiv 2$ (mod 8), then we set $n=8t+2$, where $t$ is a positive integer. By Theorem \ref{t11}, we know that $\gamma(W_{3,8t+2})=2t+1$ and by Theorem  \ref{t13}, we have $\gamma(W_{3,8t+2}-v_1)\ge \lceil\frac{8t+1}{4}\rceil=2t+1=\gamma(W_{3,8t+2})$ and so $W_{3,8t+2}$ is not $\gamma$-critical. If $n\equiv 6$ (mod 8), then we set $n=8t+6$, where $t$ is a positive integer. By Theorem  \ref{t11}, we know that $\gamma(W_{3,8t+6})=2t+2$ and by Theorem \ref{t13}, we have $\gamma(W_{3,8t+6}-v_1)\ge \lceil\frac{8t+5}{4}\rceil=2t+2=\gamma(W_{3,8t+6})$ and so $W_{3,8t+6}$ is not $\gamma$-critical.
 \end{proof}
Theorem \ref{t31} and Lemma \ref{l25} implies that the following corollary.
\begin{corollary}
A $3$-regular Kn\"odel graph $W_{3,n}$ is $\gamma$-stable if and only if $n\not\equiv 4$ \emph{(mod 8)}.
\end{corollary}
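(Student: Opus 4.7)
The plan is to observe that for the $3$-regular Knödel graphs, Lemma \ref{l25} already reduces the behavior at any single vertex to a binary dichotomy: for every vertex $w$, we have $\gamma(W_{3,n}-w)\in\{\gamma(W_{3,n})-1,\gamma(W_{3,n})\}$. This bracket means no intermediate outcome is possible, so for each individual vertex $w$, removing $w$ either decreases $\gamma$ by exactly one or leaves it unchanged.

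Next, I would invoke vertex transitivity of $W_{3,n}$, noted in the introduction. If $\sigma$ is an automorphism sending $w$ to $w'$, then $W_{3,n}-w$ and $W_{3,n}-w'$ are isomorphic, hence have the same domination number. Consequently the value $\gamma(W_{3,n}-w)$ does not depend on the choice of $w$. So either \emph{every} vertex decreases the domination number (the graph is $\gamma$-critical) or \emph{no} vertex does (the graph is $\gamma$-stable); no mixed situation can arise for a $3$-regular Knödel graph.

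With this dichotomy in hand, the corollary is immediate from Theorem \ref{t31}: being $\gamma$-stable is the exact complement of being $\gamma$-critical among $3$-regular Knödel graphs, and Theorem \ref{t31} shows $\gamma$-criticality is equivalent to $n\equiv 4\pmod 8$. Therefore $W_{3,n}$ is $\gamma$-stable if and only if $n\not\equiv 4\pmod 8$.

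There is no real obstacle in this argument; the only point that deserves a line of care is the passage from "all vertices behave identically" (vertex transitivity) to the clean either/or statement, since \emph{a priori} the definitions of $\gamma$-critical and $\gamma$-stable quantify over all vertices. Vertex transitivity collapses this universal quantifier to a single check, and Lemma \ref{l25} guarantees that the single check has only two possible outcomes.
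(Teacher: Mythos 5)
Your proof is correct and follows essentially the same route as the paper, which derives the corollary precisely from Theorem \ref{t31} together with Lemma \ref{l25}. Your explicit use of vertex transitivity to rule out a mixed situation just spells out the step the paper leaves implicit (its proof of Theorem \ref{t31} already notes that the non-critical cases are in fact $\gamma$-stable), so no gap remains.
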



\section{\bf 4-regular Kn\"odel graphs}

In this section, we discuss on $\gamma$-critically of 4-regular Kn\"odel graphs. We study our main result in four lemmas.

\begin{lemma}\label{l41}
$W_{4,n}$ is $\gamma$-critical if \\
\emph{i)} $n=26$  or\\
\emph{ii)} $n\ge22$ and $n\equiv 2$ \emph{(mod 10)}  or\\
\emph{iii)} $n\ge38$ and $n\equiv 8$ \emph{(mod 10)}.
\end{lemma}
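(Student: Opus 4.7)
The plan is to use the vertex transitivity of $W_{4,n}$ together with Lemma \ref{l25}. By transitivity it suffices to verify the criticality condition for the single vertex $v_1$; by Lemma \ref{l25}, $\gamma(W_{4,n}-v_1)\in\{\gamma(W_{4,n})-1,\gamma(W_{4,n})\}$, so the task reduces to exhibiting, in each of the three cases, an explicit dominating set $D$ of $W_{4,n}-v_1$ whose size equals $\gamma(W_{4,n})-1$. Reading $\gamma(W_{4,n})$ off Theorem \ref{t12}, the target sizes are $|D|=6$ in case (i), $|D|=2\lfloor n/10\rfloor+1$ in case (ii), and $|D|=2\lfloor n/10\rfloor+3$ in case (iii).

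For the constructions I would use the adjacency rule $u_iv_j\in E(W_{4,n})\iff j-i\equiv 0,1,3,7\pmod{n/2}$, so that each $u_i$ dominates $v_i,v_{i+1},v_{i+3},v_{i+7}$ and each $v_j$ dominates $u_j,u_{j-1},u_{j-3},u_{j-7}$ (indices mod $n/2$). Since $\gamma(W_{4,n})$ grows roughly like $n/5$, an optimal dominating set uses about one element per five consecutive indices, so for the infinite families in cases (ii) and (iii) I would try a quasi-periodic pattern of period $10$ of the shape
\[
D=\{u_{a+5i}:0\le i\le t-1\}\cup\{v_{b+5i}:0\le i\le t-1\}\cup C,
\]
where the shifts $a,b$ are fixed and $C$ is a small correction set (one extra vertex in case (ii), three in case (iii)) chosen so that $v_1\notin D$. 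For case (i) with $n=26$, I would instead exhibit an ad hoc six-element dominating set of $W_{4,26}-v_1$. In every case the verification uses the adjacency rule index-by-index, together with Observation \ref{o22} applied to $D_U$ and $D_V$ to control the coverage via the corresponding cyclic-sequences.

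The main obstacle will be the bookkeeping required to show that $D$ dominates all of $U\cup V\setminus\{v_1\}$: the interaction between the jumps $\{0,1,3,7\}$ and the wrap-around modulo $n/2$ is delicate, and the quasi-periodic pattern must mesh correctly at the ``seam'' where indices return to $1$, with both the deletion of $v_1$ and the placement of the correction vertices in $C$ taken into account. Precisely this wrap-around failure forces the lower thresholds $n\ge 22$ in case (ii) and $n\ge 38$ in case (iii), as well as the isolation of $n=26$ in case (i); the excluded small values (such as $n=16,18,28,36$) correspond to the exceptional entries of Theorem \ref{t12} and will be handled separately in later lemmas. Once a valid $D$ of the specified size is produced in each case, $|D|=\gamma(W_{4,n})-1$ yields $\gamma(W_{4,n}-v_1)\le\gamma(W_{4,n})-1$, and combined with vertex transitivity this gives the claimed $\gamma$-criticality.
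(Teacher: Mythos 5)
Your overall strategy coincides with the paper's: by vertex transitivity it suffices to consider deletion of the single vertex $v_1$, and by Lemma \ref{l25} together with Theorem \ref{t12} it suffices to exhibit a dominating set of $W_{4,n}-v_1$ of size $\gamma(W_{4,n})-1$; your target sizes ($6$, $2\lfloor n/10\rfloor+1$, $2\lfloor n/10\rfloor+3$) and your adjacency rule ($u_i\sim v_j$ iff $j-i\equiv 0,1,3,7\pmod{n/2}$) are all correct.

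The genuine gap is that the proposal never actually produces the dominating sets, and producing and verifying them is the entire content of this lemma. Saying you ``would try'' a pattern $\{u_{a+5i}\}\cup\{v_{b+5i}\}\cup C$ with unspecified $a$, $b$ and correction set $C$, and an unspecified ad hoc $6$-element set for $n=26$, while acknowledging that the ``bookkeeping'' at the wrap-around is the main obstacle, defers exactly the step that constitutes the proof: no concrete sets are written down and no domination check is performed, so nothing is established. For comparison, the paper's proof is precisely such explicit constructions, e.g.\ $D=\{u_2,u_{10},v_6,v_7,v_8,v_{12}\}$ for $n=26$, and $D=\{u_{5i-1}:1\le i\le t-1\}\cup\{u_{5t}\}\cup\{v_{5i-2}:1\le i\le t\}\cup\{v_{5t-1}\}$ for $n=10t+2$, $t\ge2$, with an analogous set for $n=10t+8$, $t\ge3$; your template has the right shape (a $5$-periodic block on each side plus a few corrections), so the plan is viable, but until explicit sets are fixed and checked to dominate all of $U\cup(V\setminus\{v_1\})$, the lemma is not proved. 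A minor further inaccuracy: the thresholds $n\ge22$, $n\ge38$ and the isolation of $n=26$ are not forced by a seam failure of the construction; they reflect the exceptional values of $\gamma$ in Theorem \ref{t12} and the stability results proved in the later lemmas, and for the present ``if'' direction they merely delimit the values of $n$ for which a set of size $\gamma(W_{4,n})-1$ must be exhibited.
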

\begin{proof}
We prove the $\gamma$-criticality of $W_{4,n}$ in three given cases for $n$. In each case, we present a dominating set for $W_{4,n}-v_1$ with $\gamma(W_{4,n})-1$ vertices that implies $W_{4,n}$  is  $\gamma$-critical.\\
(i) If $n=26$, then the set $D=\{u_2,u_{10}\}\cup \{v_6,v_7,v_8,v_{12}\}$ is a dominating set for $W_{4,26}-v_1$ with 6 vertices and so $\gamma(W_{4,26}-v_1)\le 6$. By Theorem \ref{t12} we know that $\gamma(W_{4,28})=7$. Therefore, by Lemma \ref{l25} we have $\gamma(W_{4,28}-v_1)=6<\gamma(W_{4,28})$ and so $W_{4,28}$  is  $\gamma$-critical.\\
(ii) If $n\ge22$ and $n\equiv 2$  (mod 10), then we set $n=10t+2$, where $t\ge2$. The set  $D=\{u_{5i-1}:i=1,2,\cdots,t-1\}\cup\{u_{5t}\}\cup \{v_{5i-2}:i=1,2,\cdots,t\}\cup\{v_{5t-1}\}$ is a dominating set for $W_{4,10t+2}-v_1$ with $2t+1$ vertices and so $\gamma(W_{4,10t+2}-v_1)\le 2t+1$. By Theorem \ref{t12} we know that $\gamma(W_{4,10t+2})=2t+2$. Therefore, by Lemma \ref{l25} we have $\gamma(W_{4,10t+2}-v_1)=2t+1<\gamma(W_{4,10t+2})$ and so $W_{4,10t+2}$  is  $\gamma$-critical, where $t\ge 2$.\\
(iii) If $n\ge38$ and $n\equiv 8$  (mod 10), then we set $n=10t+8$, where $t\ge3$. The set  $D=\{u_{5i-1}:i=1,2,\cdots,t\}\cup\{u_{5t}\}\cup \{v_{5i-2}:i=1,2,\cdots,t+1\}\cup\{v_6,v_{5t-1}\}$ is a dominating set for $W_{4,10t+8}-v_1$ with $2t+3$ vertices and so $\gamma(W_{4,10t+8}-v_1)\le 2t+3$. By Theorem \ref{t12} we know that $\gamma(W_{4,10t+8})=2t+4$. Therefore, by Lemma \ref{l25} we have $\gamma(W_{4,10t+8}-v_1)=2t+3<\gamma(W_{4,10t+8})$ and so $W_{4,10t+8}$  is  $\gamma$-critical, where $t\ge 3$.
\end{proof}

Now, we will prove that the Kn\"odel graphs $W_{4,n}$ are $\gamma$-stable for all positive integer $n$ where $n$ does not satisfy in Lemma \ref{l41}. In each cases, we show that $\gamma(W_{4,n}-w)=\gamma(W_{4,n})$ for any vertex $w\in U\cup V$.

\begin{lemma}\label{l42}
The Kn\"odel graph $W_{4,n}$ is $\gamma$-stable if $n\equiv 0$ \emph{(mod 10)}  or $n\equiv 4$ \emph{(mod 10)}.
\end{lemma}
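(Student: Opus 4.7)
The plan is to use vertex transitivity of $W_{4,n}$ (so $W_{4,n}-w$ is isomorphic to $W_{4,n}-v_1$ for every vertex $w$) to reduce the claim to showing $\gamma(W_{4,n}-v_1)\geq \gamma(W_{4,n})$, since Lemma~\ref{l25} already supplies the matching upper bound. For $n\equiv 0$ (mod $10$) this is immediate from Theorem~\ref{t13}: writing $n=10t$, Theorem~\ref{t12} gives $\gamma(W_{4,n})=2t$, while applying the degree bound to the $(10t-1)$-vertex graph $W_{4,n}-v_1$ of maximum degree at most $4$ yields $\gamma(W_{4,n}-v_1)\geq \lceil(10t-1)/5\rceil=2t$.

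The case $n\equiv 4$ (mod $10$) is the delicate one, since for $n=10t+4$ we have $\gamma(W_{4,n})=2t+2$ by Theorem~\ref{t12} but Theorem~\ref{t13} only forces $\gamma(W_{4,n}-v_1)\geq 2t+1$. I would argue by contradiction. Assume $D$ is a dominating set of $W_{4,n}-v_1$ with $|D|\leq 2t+1$, and set $a=|D_U|$, $b=|D_V|$. The containments $U\setminus D_U\subseteq N(D_V)$ and $(V\setminus\{v_1\})\setminus D_V\subseteq N(D_U)$, combined with $|N(D_V)|\leq 4b$ and $|N(D_U)|\leq 4a$, give $5t+2-a\leq 4b$ and $5t+1-b\leq 4a$; adding the two forces $a+b=2t+1$, and substituting back pins $(a,b)=(t,t+1)$. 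In particular $(V\setminus\{v_1\})\setminus D_V$ has size $4t$, so $|N(D_U)|=4t=4|D_U|$ and the $D_U$-neighborhoods in $V$ are pairwise disjoint. Lemma~\ref{l24} then forbids the cyclic-sequence of $D_U$ from using any element of $\mathscr{M}_4=\{1,2,3,4,6,7\}$, while Observation~\ref{o22}(1) says this sequence has exactly $t$ positive terms summing to $n/2=5t+2$. Writing $k$ for the number of terms of size $\geq 8$, the remaining $t-k$ terms must equal $5$, so the sum is at least $5(t-k)+8k=5t+3k$; this forces $k=0$, but then the sum is $5t\neq 5t+2$, a contradiction.

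The main obstacle is plainly the second case, where the generic lower bound from Theorem~\ref{t13} misses the target by one unit and the Kn\"odel-specific Lemma~\ref{l24} must be invoked to recover it. The critical step is the double-counting that simultaneously freezes the size split $(a,b)=(t,t+1)$ and produces the equality $|N(D_U)|=4|D_U|$, since only then does Lemma~\ref{l24} reduce the problem to the arithmetic question of whether $5t+2$ admits a composition into $t$ parts from $\{5\}\cup\{8,9,\ldots\}$. A negative answer to that purely numerical question closes the argument and, combined with Lemma~\ref{l25}, yields the stated $\gamma$-stability in both residue classes.
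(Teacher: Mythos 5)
Your proof is correct and follows essentially the same route as the paper: the easy case $n\equiv 0$ (mod 10) via Theorem \ref{t13}, and for $n=10t+4$ the same double counting that pins $(|D_U|,|D_V|)=(t,t+1)$, followed by the cyclic-sequence argument on $D_U$ with $\sum n_i=5t+2$. The only cosmetic difference is that you run the final step in the contrapositive direction (disjoint neighborhoods force, via Lemma \ref{l24}, that no term lies in $\mathscr{M}_4$, then derive an arithmetic contradiction), whereas the paper first notes some term must lie in $\mathscr{M}_4$ and then uses Lemma \ref{l23} to get a domination deficiency; the content is the same.
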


\begin{proof}
If $n\equiv 0$ (mod 10), then we set $n=10t$, where $t\ge2$. By Theorem \ref{t12}, we know that $\gamma(W_{4,10t})=2t$. Also, by Theorem \ref{t13}, $\gamma(W_{4,10t}-w)\ge \lceil \frac{10t-1}{5}\rceil=2t$ for any vertex $w\in U\cup V$ and therefore, by Lemma \ref{l25} we have $\gamma(W_{4,10t}-w)=\gamma(W_{4,10t})=2t$. Thus, $W_{4,10t}$ is $\gamma$-stable, where $t\ge2$.\\
If $n\equiv 4$ (mod 10), then we set $n=10t+4$, where $t\ge2$. By Theorem \ref{t12}, we know that $\gamma(W_{4,10t+4})=2t+2$. Also, by Theorem \ref{t13}, we have $\gamma(W_{4,10t+4}-w)\ge \lceil \frac{10t+3}{5}\rceil=2t+1$ for any vertex $w\in U\cup V$. We will show that $\gamma(W_{4,10t+4}-w)\ne 2t+1$. Without loss of generality, we assume that $w\in V$.
Suppose to the contrary, $D$ is a dominating set of $W_{4,2t+4}-w$ and $|D|=2t+1$. If $x=|D_U|$ and $y=|D\cap(V\setminus\{w\})|$, then $x+y=2t+1$. Since $D$ is a dominating set, we have $4x+y=4x+(2t+1-x)\ge5t+1=|V\setminus\{w\}|$ and $4y+x=4y+(2t+1-y)\ge5t+2=|U|$, therefore $x\ge t$ and $y\ge t+1$. Now, we deduce that $x=t$ and $y=t+1$. Assume that $n_1,n_2,\cdots, n_t$ is the cyclic-sequence of $D_U$. Thus, by Observation \ref{o22}, $n_1+n_2+\cdots+n_t=5t+2$ and so $\{n_1,n_2,\cdots, n_t\}\cap \mathscr{M}_4\ne\emptyset$. Now, Lemma \ref{l23} implies that $D_U$ dominates at most $4t-1$ vertices of $V\setminus\{w\}$ and $D$ dominates at most $4t-1+t+1=5t$ vertices of $V\setminus\{w\}$, a contradiction. Hence, $\gamma(W_{4,10t+4}-w)\ne 2t+1$, as desired. Therefore, $\gamma(W_{4,10t+4}-w)=2t+2=\gamma(W_{4,10t+4})$, and so $W_{4,10t+4}$ is $\gamma$-stable, where $t\ge2$.
\end{proof}

\begin{lemma}\label{l43}
The Kn\"odel graph $W_{4,n}$ is $\gamma$-stable if $n\in\{16,18,28,36\}$.
\end{lemma}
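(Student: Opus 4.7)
The plan is to apply Lemma~\ref{l25}, which reduces the task to showing $\gamma(W_{4,n}-w)\ge\gamma(W_{4,n})$ for every vertex $w$, and then by vertex transitivity to fix $w=v_1$. Theorem~\ref{t12} supplies $\gamma(W_{4,16})=\gamma(W_{4,18})=4$, $\gamma(W_{4,28})=7$, and $\gamma(W_{4,36})=8$.

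The case $n=18$ is immediate from Theorem~\ref{t13}: $\gamma(W_{4,18}-v_1)\ge\lceil 17/5\rceil=4=\gamma(W_{4,18})$. For $n\in\{16,28,36\}$ I would argue by contradiction, assuming a dominating set $D$ of $W_{4,n}-v_1$ with $|D|=\gamma(W_{4,n})-1$ and writing $x=|D_U|$, $y=|D\cap (V\setminus\{v_1\})|$. The double counting from the proof of Lemma~\ref{l42} gives $4x+y\ge\tfrac{n}{2}-1$ and $4y+x\ge\tfrac{n}{2}$. Coupled with $x+y=\gamma(W_{4,n})-1$, these are inconsistent for $n=16$ (forcing $x\ge 2$ and $x\le 1$) and for $n=36$ (forcing $x\ge 4$ and $x\le 3$), so both cases are disposed of immediately. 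For $n=28$ the inequalities are satisfied only at $x=y=3$, yielding no direct contradiction.

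The case $n=28$ is the main obstacle, and here I would turn to the cyclic-sequence machinery. Let $D_V=D\cap(V\setminus\{v_1\})$. With $|D_U|=3$, the set $D_V$ must dominate all $11$ vertices of $U\setminus D_U$, so $|N(D_V)|\ge 11$, and Lemma~\ref{l24} forces at most $12-11=1$ of the three entries of the cyclic sequence of $D_V$ to lie in $\mathscr{M}_4=\{1,2,3,4,6,7\}$. A brief enumeration of positive compositions of $14$ into three parts with this property leaves, up to cyclic rotation, only $(5,5,4)$ and $(1,5,8)$; an inclusion--exclusion count of the pairwise overlaps, each governed by Lemma~\ref{l23}, rules out $(1,5,8)$ (it attains only $|N|=10$). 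Hence the cyclic sequence of $D_V$ must be $(5,5,4)$, the bound $|N(D_V)|\le 11$ is met with equality, and $D_U=U\setminus N(D_V)$ is uniquely determined by $D_V$.

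Finally, using the shift automorphism $(i,j)\mapsto(i,j+c)$ of $W_{4,28}$, I would reduce to a single representative, for instance $D_V=\{v_2,v_7,v_{12}\}$, which forces $D_U=\{u_3,u_8,u_{10}\}$. A direct computation of $N(D_U)$ exhibits $v_5$ and $v_{14}$ as two vertices of $V\setminus\{v_1\}$ lying neither in $D_V$ nor in $N(D_U)$, so $D$ fails to dominate $V\setminus\{v_1\}$, giving the sought contradiction. The delicate step, which I expect to be the main technical obstacle, is the cyclic-sequence/overlap analysis that pins $(5,5,4)$ down as the unique feasible shape; once this is in hand, the shift symmetry collapses all remaining verifications into a single explicit computation.
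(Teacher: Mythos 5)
Your proposal follows essentially the same route as the paper: the identical counting inequalities dispose of $n=16,18,36$ (your use of Theorem~\ref{t13} for $n=18$ is an equivalent shortcut), and for $n=28$ the same cyclic-sequence analysis forces $x=y=3$, $|N(D_V)|=11$ and the shape $(5,5,4)$, finished by an explicit check of one placement; your explicit elimination of $(1,5,8)$ via Lemma~\ref{l23} even supplies a detail the paper glosses over. One caution: the shift automorphism does not fix the deleted vertex, so the reduction to a single representative $D_V=\{v_2,v_7,v_{12}\}$ with $w=v_1$ is not literally a WLOG; the argument is saved precisely because your computation exhibits \emph{two} undominated vertices of $V$ ($v_5$ and $v_{14}$), of which at most one can coincide with the deleted vertex --- make that observation explicit (as the paper implicitly does with $v_3,v_8$) and the proof is complete.
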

\begin{proof}
If $n=16$, then by Theorem \ref{t12} we have $\gamma(W_{4,16})=4$. We show that $\gamma(W_{4,16}-w)=4$ for any vertex $w\in U\cup V$. Without loss of generality, we assume that $w\in V$. Suppose to the contrary that $\gamma(W_{4,16}-w)\le3$ and $D$ is a dominating set of $W_{4,16}-w$ and $|D|=3=x+y$, where $x=|D_U|$, $y=|D\cap(V\setminus\{w\})|$. $D$ dominates at most $4x+y=3x+3$ vertices of $V\setminus\{w\}$ and dominates at most $4y+x=3y+3$ vertices of $U$. Since $D$ is a dominating set, we have $3x+3\ge 7$ and $3y+3\ge8$ and so $x\ge2$, $y\ge2$ and $x+y\ge4$, a contradiction. Hence, by Lemma \ref{l25} we have $\gamma(W_{4,16}-w)=\gamma(W_{4,16})=4$ and $W_{4,16}$ is $\gamma$-stable.\\
If $n=36$, then by Theorem \ref{t12} we have $\gamma(W_{4,36})=8$. We show that $\gamma(W_{4,36}-w)=8$. for any vertex $w\in U\cup V$. Without loss of generality, we assume that $w\in V$. Suppose to the contrary that $\gamma(W_{4,36}-w)\le7$ and $D$ is a dominating set of $W_{4,36}-w$ and $|D|=7=x+y$, where $x=|D_U|$, $y=|D\cap(V\setminus\{w\})|$. $D$ dominates at most $4x+y=3x+7$ vertices of $V\setminus\{w\}$ and dominates at most $4y+x=3y+7$ vertices of $U$. Since $D$ is a dominating set, we have $3x+7\ge18$ and $3y+3\ge17$ and so $x\ge4$, $y\ge4$ and $x+y\ge8$, a contradiction. Hence, by Lemma \ref{l25} we have $\gamma(W_{4,36}-w)=\gamma(W_{4,36})=8$ and $W_{4,36}$ is $\gamma$-stable.\\
If $n=18$, then by Theorem \ref{t12} we have $\gamma(W_{4,18})=4$. We show that $\gamma(W_{4,18}-w)=4$ for any vertex $w\in U\cup V$. Without loss of generality, we assume that $w\in V$. Suppose to the contrary that $\gamma(W_{4,18}-w)\le 3$ and $D$ is a dominating set of $W_{4,18}-w$ and $|D|=3=x+y$, where $x=|D_U|$, $y=|D\cap(V\setminus\{w\})|$. $D$ dominates at most $4x+y=3x+3$ vertices of $V\setminus\{w\}$ and dominates at most $4y+x=3y+3$ vertices of $U$. Since $D$ is a dominating set, we have $3x+3\ge 8$ and $3y+3\ge 9$ and so $x\ge 2$, $y\ge 2$ and $x+y\ge 4$, a contradiction. Hence, by Lemma \ref{l25} we have $\gamma(W_{4,18}-w)=\gamma(W_{4,18})=4$ and $W_{4,18}$ is $\gamma$-stable.\\
If $n=28$, then by Theorem \ref{t12} we have $\gamma(W_{4,28})=7$. We show that $\gamma(W_{4,28}-w)=7$ for any vertex $w\in U\cup V$. Without loss of generality, we assume that $w\in V$. Suppose to the contrary that $\gamma(W_{4,28}-w)\le 6$ and $D$ is a dominating set of $W_{4,28}-w$ and $|D|=6=x+y$, where $x=|D_U|$, $y=|D\cap(V\setminus\{w\})|$. $D$ dominates at most $4x+y=3x+3$ vertices of $V\setminus\{w\}$ and dominates at most $4y+x=3y+3$ vertices of $U$. Since $D$ is a dominating set, we have $3x+3\ge 13$ and $3y+3\ge 14$ and so $x\ge 3$, $y\ge 3$. Therefore, we have $x=|D_U|=3$ and $y=|D\cap(V\setminus\{w\})|=3$. Let $D\cap(V\setminus\{w\})=\{v_i,v_j,v_k\}$, where $1\le i<j<k\le14$. The corresponding cyclic-sequence of this set is $n_1=j-i$, $n_2=k-j$ and $n_3=14-k+i$. Since $n_1+n_2+n_3=14$, then $\{n_1,n_2,n_3\}\cap \mathscr{M}_4\ne\emptyset$ and by Lemma \ref{l23} we have $|N(\{v_i,v_j,v_k\})|\le11$. If $|N(\{v_i,v_j,v_k\})|\le10$, then $D$ dominates at most $10+3=13$ vertices of $U$, a contradiction. Thus, we must have $|N(\{v_i,v_j,v_k\})|=11$. In this case, Lemma \ref{l24} implies that precisely one of the numbers $n_1, n_2$ and $n_3$ belongs to $\mathscr{M}_4$. By symmetry, we have the only case $n_1=4$ and $n_2=n_3=5$ for cyclic-sequence of $\{v_i,v_j,v_k\}$. We can easily see that for any selection of $D_V$, $D$ dominates at most 26 vertices of $W_{4,28}-w$, a contradiction. For example, if $w\notin\{v_1,v_5,v_{10}\}$, then we consider the set $D_V=\{v_1,v_5,v_{10}\}$ and so $D_U=U\setminus N(D_V)=\{u_6,u_{11},u_{13}\}$. Now, $D=D_U\cup D_V$ does not dominate the vertices $v_3,v_8$, a contradiction.
\end{proof}

\begin{lemma}\label{l44}
$W_{4,n}$ is $\gamma$-stable if $n\ge46$ and $n\equiv 6$ .
\end{lemma}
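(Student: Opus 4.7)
The plan is to follow the structure of Lemma~\ref{l42}'s proof. Set $n = 10t + 6$ with $t \geq 4$, so by Theorem~\ref{t12} we have $\gamma(W_{4,n}) = 2t + 3$. By Lemma~\ref{l25} and vertex-transitivity of $W_{4,n}$, it suffices to show $\gamma(W_{4,n} - v_1) \geq 2t + 3$. Assume toward a contradiction that $D$ dominates $W_{4,n} - v_1$ with $|D| \leq 2t + 2$, and set $x = |D_U|$ and $y = |D \cap (V \setminus \{v_1\})|$. The double-counting inequalities $4x + y \geq |V \setminus \{v_1\}| = 5t + 2$ and $4y + x \geq |U| = 5t + 3$, together with $x + y \leq 2t + 2$, force $|D| = 2t + 2$ with $(x, y) \in \{(t, t+2), (t+1, t+1)\}$.

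For $(x, y) = (t, t+2)$ the counting inequalities become tight: $|N(D_U)| = 4t$, the set $D \cap V$ is disjoint from $N(D_U)$, and $v_1 \notin N(D_U)$. Lemma~\ref{l24} then forces the cyclic-sequence of $D_U$ to contain no element of $\mathscr{M}_4$, and since its $t$ entries sum to $5t + 3$ with each in $\{5\} \cup \{8, 9, \ldots\}$, the sequence must be $(5, 5, \ldots, 5, 8)$. Hence $D_U = \{u_s, u_{s+5}, \ldots, u_{s+5(t-1)}\}$ for some $s$, and a residue-mod-$5$ calculation determines $V \setminus N(D_U)$ explicitly; the requirement $v_1 \in V \setminus N(D_U)$ restricts $s$ to the $t + 3$ values in $\{3, 4, 5, 10, 15, \ldots, 5t, 5t+2\}$. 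For each such $s$, $D \cap V$ is forced to equal $V \setminus \{v_1\} \setminus N(D_U)$, and a direct check shows that some vertex of $U \setminus D_U$, typically $u_1$ or $u_{5t+1}$ (both neighbors of $v_1$), has no neighbor in $D \cap V$, contradicting domination.

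For $(x, y) = (t+1, t+1)$ the looser counting bounds yield $|N(D_U)| \geq 4t + 1$ and $|N(D \cap V)| \geq 4t + 2$, so by Lemma~\ref{l24} the cyclic-sequences of $D_U$ and of $D \cap V$ contain at most $3$ and $2$ entries of $\mathscr{M}_4$, respectively. An elementary excess-deficit analysis (comparing each entry to $5$) on $(t+1)$-term sequences summing to $5t + 3$ confines each cyclic-sequence, up to cyclic rotation, to a short list of shapes: the main ones are $(3, 5, \ldots, 5)$, $(4, 4, 5, \ldots, 5)$, $(1, 7, 5, \ldots, 5)$, $(2, 6, 5, \ldots, 5)$, together with a handful of patterns having one or two entries $\geq 8$ balanced by small $\mathscr{M}_4$ entries. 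For each compatible pairing of shapes for $D_U$ and for $D \cap V$, Lemma~\ref{l23} bounds the forced neighborhood overlaps and explicit index arithmetic modulo $5t + 3$ exhibits an undominated vertex, ruling out that configuration.

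The main obstacle is the case bookkeeping in the $(t+1, t+1)$ case, where many compatible shape pairs must be examined; however each sub-case is finite and routine, and the argument mirrors the style already used for $n = 28$ in the proof of Lemma~\ref{l43}.
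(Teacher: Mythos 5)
Your setup, the counting that forces $|D|=2t+2$ with $(x,y)\in\{(t,t+2),(t+1,t+1)\}$, and your treatment of the case $(x,y)=(t,t+2)$ all match the paper's proof: there the tightness of $4x+y=5t+2$ forces the cyclic-sequence of $D_U$ to be $(5,\dots,5,8)$, forces $w\in V\setminus N(D_U)$ and $D_V=(V\setminus N(D_U))\setminus\{w\}$, and a list of private neighbors in $U$ shows that deleting $w$ leaves one of them undominated. That part of your sketch is fine. The gap is in the case $(x,y)=(t+1,t+1)$, which is the bulk of the proof, and it is twofold. First, your constraint on the cyclic-sequence of $D\cap V$ is too weak. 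Lemma~\ref{l24} alone gives ``at most $2$ \emph{entries} in $\mathscr{M}_4$,'' under which the admissible shapes include, besides $(3,5,\dots,5)$ and $(4,4,5,\dots,5)$, also $(1,7,5,\dots,5)$, $(2,6,5,\dots,5)$, all arrangements of $\{a,b,S\}$ with $a+b+S=13$, $a,b\in\mathscr{M}_4$, $S\ge8$, and even $(1,1,8,8,5,\dots,5)$ --- far more than ``a handful,'' and each with several inequivalent cyclic orderings. The paper instead observes that since $|N(D\cap V)|\ge 4t+2$, at most two vertices of $U$ have two or more neighbors in $D\cap V$, so by Lemma~\ref{l23} at most two of the numbers $n_1,\dots,n_{t+1},\,n_1+n_2,\dots,n_{t+1}+n_1$ (entries \emph{and consecutive pair-sums}) lie in $\mathscr{M}_4$; this kills $(1,7,\dots)$ and $(2,6,\dots)$ outright (the $1$, resp.\ $2$, sits next to a $5$, creating a third bad value $6$, resp.\ $7$) and reduces the enumeration to exactly six categories. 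Without this sharpening your list is incomplete as stated.

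Second, your plan to enumerate shapes for $D_U$ and for $D\cap V$ separately and then check ``each compatible pairing'' is not a finite routine check: for a fixed pair of shapes the relative offset of the two cyclic-sequences is a free parameter ranging over $\mathbb{Z}_{5t+3}$, with $t$ unbounded, so ``explicit index arithmetic'' per pairing does not terminate the argument by itself. The device that makes the case analysis close is the containment $U\setminus N(D\cap V)\subseteq D_U$: once a representative $D\cap V$ is fixed for each of the six categories, $|U\setminus N(D\cap V)|$ is already $t$ or $t+1$, so $D_U$ is determined up to at most one additional free vertex, and one then exhibits at least two (to allow for the possibility that one of them is $w$) undominated vertices of $V$. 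Your proposal never pins down $D_U$ this way, so the decisive verification is asserted rather than performed. To repair the argument, adopt the pair-sum refinement to get the six categories and derive $D_U$ from $D\cap V$ as above; as written, the proof is a correct strategy with its central case left open.
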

\begin{proof}

If $n\ge46$ and $n\equiv 6$  (mod 10), then we set $n=10t+6$, where $t\ge4$. By Theorem \ref{t12}, we have $\gamma(W_{4,n})=2t+3$ and we show that $\gamma(W_{4,n}-w)=2t+3$ for any vertex $w\in U\cup V$.  Without loss of generality, we assume that $w\in V$. Suppose to the contrary that $\gamma(W_{4,n}-w)\le 2t+2$ and $D$ is a dominating set of $W_{4,n}-w$ and $|D|=2t+2=x+y$, where $x=|D_U|$ and $y=|D\cap(V\setminus\{w\})|$. $D$ dominates at most $4x+y=3x+2t+2$ vertices of $V\setminus\{w\}$ and dominates at most $4y+x=3y+2t+2$ vertices of $U$. Since $D$ is a dominating set, we have $3x+2t+2\ge 5t+2$ and $3y+2t+2\ge 5t+3$ and so $x\ge t$, $y\ge t+1$. Now, we have two cases (i) $x=t$, $y=t+2$ and (ii) $x=y=t+1$.

 \textbf{i)} Assume that $x=t$, $y=t+2$ and $n_1,n_2,\cdots,n_t$ is the cyclic-sequence of $D_U$ and so by Observation \ref{o22} we have $n_1+n_2+\cdots+n_t=5t+3$. Since $D_U$ has to dominate precisely $4t=|(V\setminus\{w\})\setminus(D_V)|$ vertices of $V$, Lemmas \ref{l23} and \ref{l24} implies that  $\{n_1,n_2,\cdots,n_t\}\cap\mathscr{M}_4=\emptyset$. Thus, there exist the unique cyclic-sequence $n_1=n_2=\dots= n_{t-1}=5$ and $n_t=8$, with respect to vertex transitivity property of Kn\"odel graphs. If $D_U=\{u_{5i-4}:i=1,2,\cdots,t\}$, then $ V\setminus N(D_U)=\{v_3,v_{5t+1},v_{5t+2}\}\cup\{v_{5i}:i=1,2,\cdots,t\}$ and $(V\setminus N(D_U))\subseteq D_V$. Now, we have $|D_V|=t+2$ and $|V\setminus N(D_U)|=t+3$ and therefore, $w\in (V\setminus N(D_U))$ and $D_V=(V\setminus N(D_U))\setminus\{w\}$. On the other hand, each vertex in $V\setminus N(D_U)$, including $w$, has at least an external private neighbor respect to $U$ as follows: $pn(v_{3})=\{u_{5t+3}\}$, $pn(v_{5})=\{u_4,u_{5}\}$,  $pn(v_{5t+1})=\{u_{5t-2}\}$, $pn(v_{5t+2})=\{u_{5t+2}\}$ and $u_{5i-3}\in pn(v_{5i})$, where $i=2,3,\cdots,t$. Therefore, by eliminating $w$, $D$ is not a dominating set of $W_{4,10t+6}-w$, a contradiction and so $x\ne t$.

\textbf{ii)} Assume that $x=t+1$, $y=t+1$ and $n_1,n_2,\cdots,n_{t+1}$ is the cyclic-sequence of $D_V$ and so by Observation \ref{o22} we have $n_1+n_2+\cdots+n_{t+1}=5t+3$. Since $D_V$ has to dominate at least $4t+2=|U\setminus D_U|$ vertices of $U$, thus there exist at most two pairs of vertices in $U$ that are dominated by at least two vertices in $D_V$. Therefore, by lemma \ref{l23}, up to two of numbers
  $n_1,n_2, \cdots, n_{t+1}, n_1+n_2,n_2+n_3, \cdots, n_{t}+n_{t+1}, n_{t+1}+n_1$ are belong to $\mathscr{M}_4$. We divide all the sequences that apply in this condition into 6 categories. In each category, we consider the set $D_V$ corresponding to the given cyclic-sequence such that $w\notin D_V$. Next, we show that $|N[D]|\le 10t+4$, that is, the constructed $D$ is not a dominating set, a contradiction.We emphasize that $(V\setminus N(D_V))\subseteq D_U$ and $D=D_U\cup D_V$.
  
 \textbf{1)} $n_1=3, n_2=n_3=\dots=n_{t+1}=5$.\\
 If $D_V=\{v_1\}\cup\{v_{5i-1}:i=1,2,\cdots, t\}$, then $U\setminus N(D_V)=\{u_{5i}:i=1,2,\cdots, t-1\}\cup\{u_{5t+2}\}$. Until now, we have found $2t+1$ vertices of the set $D$ and we are allowed to add just one more vertex to $D_U$. The known vertices of $D$ does not dominate the vertices $v_3, v_7, v_{5t}$ and $v_{5t+1}$ and by adding each new vertex to $D$, there are still 2 vertices that are not dominated. Therefore, $D$ dominates at most $10t+4$ vertices of $W_{4,n}-w$, a contradiction.
 
\textbf{2)} $n_1=n_r=4$, for an $r$, $2\le r\le t+1 $ and other $t-1$ elements of cyclic-sequence are equal to $5$.\\
First, assume that $r=2$, then we have $n_1=n_2=4$ and $n_3=n_4=\cdots=n_{t+1}=5$. If $D_V=\{v_1, v_5\}\cup\{v_{5i-1}:i=2,3,\cdots, t\}$, then $D_U=U\setminus N(D_V)=\{u_3, u_{5t+2}\}\cup\{u_{5i}:i=2,3,\cdots, t\}$. $D=D_U\cup D_V$ does not dominate the vertices $v_7, v_8$ and $v_{12}$, a contradiction. 
Now, assume that $3\le r\le t+1$, then we have $v_3\notin D_V$ and $N(v_3)=\{u_2, u_3, u_{5t+3}, u_{5t-1}\}$. We show that $N(v_3)\subseteq N(D_V)$. \\
$u_2\in N(v_5)$ and $v_5\in D_V$ and thus $u_2\in N(D_V)$,
$u_3\in N(v_{10})$ and $v_{10}\in D_V$ and thus $u_3\in N(D_V)$,
$u_{5t+3}\in N(v_1)$ and $v_1\in D_V$ and thus $u_{5t+3}\in N(D_V)$, $u_{5t-1}\in N(v_{5t-1})$ and $u_{5t-1}\in N(v_{5t-1})$. If $r\ne t+1$, then $v_{5t-1}\in D_V$ and if $r=t+1$, then $v_{5t}\in D_V$. In both cases we see that $u_{5t-1}\in N(D_V)$.\\
Therefore, $N(v_3)\subseteq N(D_V)$ and so $N(v_3)\cap D_U=\emptyset$. Since $v_3\notin D_V$, thus $N[v_3]\cap D=\emptyset$ or $v_3\notin N[D]$, a contradiction.

\textbf{3)} $n_1=4,n_2=1,n_3=8$ and $n_4=n_5=\cdots=n_{t+1}=5$.\\
If $D_V=\{v_1,v_5,v_6\}\cup\{v_{5i-1}:i=3,4,\cdots,t\}$, then $D_U=U\setminus N(D_V)=\{u_8,u_9\}\cup\{u_{5i}:i=2,4,\cdots,t\}$. $D=D_U\cup D_V$ does not dominate three vertices $v_2, v_3$ and $v_7$, a contradiction.

\textbf{4)} $n_1=8,n_2=1,n_3=4$ and $n_4=n_5=\cdots=n_{t+1}=5$.\\
If $D_V=\{v_1,v_9,v_{10}\}\cup\{v_{5i-1}:i=3,4,\cdots,t\}$, then $D_U=U\setminus N(D_V)=\{u_4,u_5,u_{5t+2}\}\cup\{u_{5i}:i=3,4,\cdots,t\}$. $D=D_U\cup D_V$ does not dominate three vertices $v_3,v_{13}$ and $v_{17}$, a contradiction.

\textbf{5)} $n_1=3,n_2=2,n_3=8$ and $n_4=n_5=\cdots=n_{t+1}=5$.\\
If $D_V=\{v_1,v_4,v_6\}\cup\{v_{5i-1}:i=3,4,\cdots,t\}$, then $D_U=U\setminus N(D_V)=\{u_2,u_8,u_9\}\cup\{u_{5i-5}:i=3,4,\cdots,t\}$. $D=D_U\cup D_V$ does not dominate four vertices $v_7,v_{5t},v_{5t+1}$ and $v_{5t+3}$, a contradiction.

\textbf{6)} $n_1=8,n_2=2,n_3=3$ and $n_4=n_5=\cdots=n_{t+1}=5$.\\
If $D_V=\{v_1,v_9,v_{11}\}\cup\{v_{5i-1}:i=3,4,\cdots,t\}$, then $D_U=U\setminus N(D_V)=\{u_3,u_5,u_{5t+2}\}\cup\{u_{5i}:i=3,4,\cdots,t\}$. $D=D_U\cup D_V$ does not dominate two vertices $v_7,v_{13}$, a contradiction.
\end{proof}
Finally, by lemmas \ref{l41}, \ref{l42}, \ref{l43} and \ref{l44}, we have the main results as follows.

\begin{theorem}
For any $n\ge16$, $W_{4,n}$ is $\gamma$-critical if and only if\\
\emph{i)} $n=26$  or\\
\emph{ii)} $n\ge22$ and $n\equiv 2$ \emph{(mod 10)}  or\\
\emph{iii)} $n\ge38$ and $n\equiv 8$ \emph{(mod 10)}.
\end{theorem}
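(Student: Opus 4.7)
The plan is to treat the final theorem as a bookkeeping consequence of the four lemmas just established: the forward implication is exactly Lemma \ref{l41}, so the entire proof reduces to verifying that every even $n\ge 16$ falling outside cases (i)--(iii) is covered by one of the three stability lemmas \ref{l42}, \ref{l43}, \ref{l44}. No further graph-theoretic argument is needed; the work is a residue-class bookkeeping check.

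The way I would organize it is to partition the even integers $n\ge 16$ by residue modulo $10$ and invoke the appropriate lemma in each class. For $n\equiv 0\pmod{10}$ or $n\equiv 4\pmod{10}$, Lemma \ref{l42} gives $\gamma$-stability, and these residues never appear in cases (i)--(iii), so they are consistent. For $n\equiv 2\pmod{10}$ with $n\ge 16$, the smallest instance is $n=22$, so every such $n$ falls into case (ii) of Lemma \ref{l41} and is $\gamma$-critical. For $n\equiv 8\pmod{10}$, the two small instances $n=18$ and $n=28$ are $\gamma$-stable by Lemma \ref{l43}, and all larger instances $n\ge 38$ are $\gamma$-critical by case (iii) of Lemma \ref{l41}. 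Finally, for $n\equiv 6\pmod{10}$, Lemma \ref{l43} handles $n\in\{16,36\}$ (stable), Lemma \ref{l41}(i) handles $n=26$ (critical), and Lemma \ref{l44} handles $n\ge 46$ (stable).

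Assembling these observations, each $n\ge 16$ is either shown to be $\gamma$-critical (precisely when one of (i)--(iii) holds) or $\gamma$-stable (otherwise), which gives both directions of the equivalence and completes the proof. The main potential pitfall, and the only step that needs care, is making sure the enumeration is exhaustive at the boundary values $n\in\{16,18,22,26,28,36,38,46\}$ so that no small case slips through the hypotheses $n\ge 22$, $n\ge 38$, and $n\ge 46$ in Lemmas \ref{l41} and \ref{l44}; once this is checked, no computational obstacle remains.
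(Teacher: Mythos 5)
Your proposal is correct and matches the paper's treatment: the paper also derives the theorem directly from Lemmas \ref{l41}--\ref{l44}, with the only content being the residue-class case check you carry out (and your boundary enumeration $16,36,26,46$ for $n\equiv 6$, $18,28,38$ for $n\equiv 8$, and $22$ as the smallest $n\equiv 2$ is exactly right).
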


\begin{theorem}
For any $n\ge16$, $W_{4,n}$ is $\gamma$-stable if and only if\\
\emph{i)} $n=18,28$  or\\
\emph{ii)} $n\equiv 0,4$ \emph{(mod 10)}  or\\
\emph{iii)} $n\ne26$ and $n\equiv 6$ \emph{(mod 10)}.
\end{theorem}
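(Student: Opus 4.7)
The plan is to assemble the four preceding lemmas into a single biconditional by exploiting the dichotomy between $\gamma$-criticality and $\gamma$-stability that is forced by vertex transitivity. First I would record the following reduction: since $W_{4,n}$ is vertex transitive, the quantity $\gamma(W_{4,n}-w)$ does not depend on the choice of $w\in U\cup V$. Combined with Lemma \ref{l25}, which pinches $\gamma(W_{4,n}-w)$ between $\gamma(W_{4,n})-1$ and $\gamma(W_{4,n})$, this forces $W_{4,n}$ to be either $\gamma$-critical (the common value is $\gamma(W_{4,n})-1$) or $\gamma$-stable (the common value is $\gamma(W_{4,n})$); the two classes are complementary within the family of $4$-regular Kn\"odel graphs.

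Next I would invoke the $\gamma$-criticality theorem just proved, which says that for $n\ge 16$, $W_{4,n}$ is $\gamma$-critical precisely when $n=26$, or $n\ge 22$ with $n\equiv 2\pmod{10}$, or $n\ge 38$ with $n\equiv 8\pmod{10}$. By the dichotomy above, $W_{4,n}$ is $\gamma$-stable if and only if $n$ lies outside this list, and it remains to check that the complement equals the union of conditions (i)--(iii) of the theorem.

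The verification is a straightforward residue-by-residue case check for even $n\ge 16$:
\begin{itemize}
\item $n\equiv 0,4\pmod{10}$: never critical, always stable; this is condition (ii), and Lemma \ref{l42} independently confirms stability.
\item $n\equiv 2\pmod{10}$ and $n\ge 22$: critical by Lemma \ref{l41}(ii), hence never stable in the range $n\ge 16$.
\item $n\equiv 6\pmod{10}$: the only critical value is $n=26$ (Lemma \ref{l41}(i)); stability for $n=16,36$ is given by Lemma \ref{l43} and for $n\ge 46$ by Lemma \ref{l44}, which matches condition (iii).
\item $n\equiv 8\pmod{10}$: by Lemma \ref{l41}(iii) the graph is critical for $n\ge 38$; Lemma \ref{l43} gives stability for the two remaining values $n=18$ and $n=28$, matching condition (i).
\end{itemize}
Taking the union of the stable residues produces exactly (i) $\cup$ (ii) $\cup$ (iii), completing the biconditional.

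There is no genuine obstacle here; all the hard work has already been carried out inside Lemmas \ref{l41}--\ref{l44}. The only point requiring care is making the transitivity-based dichotomy explicit, so that one does not have to re-prove stability case by case and can instead read it off directly from the complement of the critical list.
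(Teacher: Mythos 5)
Your proposal is correct, and it rests on exactly the same ingredients as the paper, which proves this theorem simply by combining Lemmas \ref{l41}--\ref{l44}: the stability lemmas \ref{l42}, \ref{l43}, \ref{l44} give the ``if'' direction directly (your residue check that conditions (i)--(iii) are precisely the cases they cover is accurate, including $n=16,36$ under (iii)), and Lemma \ref{l41} gives the ``only if'' direction since a $\gamma$-critical graph cannot be $\gamma$-stable. The one organizational difference is that you make explicit the dichotomy coming from vertex transitivity together with Lemma \ref{l25} --- namely that $\gamma(W_{4,n}-w)$ is independent of $w$, so every $W_{4,n}$ is either $\gamma$-critical or $\gamma$-stable --- and then read the stable cases off as the complement of the criticality theorem. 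The paper leaves this dichotomy implicit (it is in effect what the juxtaposition of the two final theorems asserts), and instead obtains stability from the direct constructions in Lemmas \ref{l42}--\ref{l44}; your formulation buys a cleaner logical statement of the ``complement'' step, at no extra cost, since the criticality theorem you invoke is itself grounded in the same four lemmas, so there is no circularity and no gap.
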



\end{document}